\documentclass[a4paper,12pt,reqno]{amsart}
\textheight22.4cm \topmargin0.5cm
\evensidemargin0cm 
\oddsidemargin0cm  \textwidth16cm
\headsep1cm \headheight0cm

\usepackage[normalem]{ulem}   
\usepackage{amsmath,amssymb,amsthm,amsfonts}
\usepackage[active]{srcltx}
\usepackage{datetime}
\usepackage{enumerate}        
\usepackage[colorlinks,linkcolor={black},citecolor={black},urlcolor={black}]{hyperref}
\usepackage{color}            

\newcommand{\dd}{\, \mathrm{d}}
\newcommand{\trinorm}[1]{{\left\vert\kern-0.25ex\left\vert\kern-0.25ex\left\vert #1 
    \right\vert\kern-0.25ex\right\vert\kern-0.25ex\right\vert}}

\DeclareMathOperator{\Hess}{Hess}
\newcommand{\ip}[1]{\langle {#1}\rangle}

\newcommand{\D}{\mathcal{D}}

\newcommand{\R}{\mathbb{R}}

\newcommand{\G}{\mathcal{G}}
\newcommand{\eps}{\varepsilon}
\newcommand{\F}{\mathcal{F}}
\newcommand{\cL}{\mathcal{L}}

\renewcommand{\P}{{\mathbb P}}

\newcommand{\calP}{{\mathcal{P}}}
\newcommand{\calF}{{\mathcal{F}}}
\newcommand{\rmd}{{\rm d}}

\newcommand{\Ptwo}{\mathcal{P}_2}
\renewcommand{\div}{\mathop{\mathrm{div}}}

\newcommand{\AC}{\mathop{\mathrm{AC}}}
\DeclareMathOperator*\argmin{arg\,min\!}

\newtheorem{theorem}{Theorem}[section]
\newtheorem{lemma}[theorem]{Lemma}
\newtheorem{claim}[theorem]{Claim}

\theoremstyle{definition}

\newenvironment{remark}%
  {\par\medbreak\refstepcounter{theorem}%
    \noindent\textbf{Remark~\thetheorem. }}%
  {\qed\par\medskip}

\definecolor{jan}{rgb}{0.0,0.3,0.8}

\setcounter{tocdepth}{1} 


\begin{document}

\title[From large deviations to Wasserstein gradient flows]{From large deviations to Wasserstein gradient flows in multiple dimensions}

\author{Matthias Erbar, Jan Maas and Michiel Renger}

\address{
University of Bonn\\
Institute for Applied Mathematics\\
Endenicher Allee 60\\
53115 Bonn\\
Germany}
\email{erbar@iam.uni-bonn.de}

\address{
Institute of Science and Technology Austria (IST Austria)\\
Am Campus 1\\ 
3400 \newline Klos\-ter\-neu\-burg\\ 
Austria}
\email{jan.maas@ist.ac.at}

\address{
WIAS Berlin\\ 
Mohrenstra\ss{}e 39\\ 
10117\\
Berlin\\ 
Germany}
\email{d.r.michiel.renger@wias-berlin.de}

\begin{abstract}
  We study the large deviation rate functional for the empirical
  distribution of independent Brownian particles with drift. In one
  dimension, it has been shown by Adams, Dirr, Peletier and Zimmer
  \cite{Adams2011} that this functional is asymptotically equivalent
  (in the sense of $\Gamma$-convergence as the
    time-step goes to zero) to the Jordan--Kinderlehrer--Otto
  functional arising in the Wasserstein gradient flow structure of the
  Fokker--Planck equation. In higher dimensions, part of this
  statement (the lower bound) has been recently proved by Duong,
  Laschos and Renger, but the upper bound remained open, since the
  proof in \cite{Duong2013a} relies on regularity properties of
  optimal transport maps that are restricted to one dimension. In this
  note we present a new proof of the upper bound, thereby generalising
  the result of \cite{Adams2011} to arbitrary
  dimensions.
\end{abstract}

\maketitle

\section{Introduction}
\label{sec:introduction}

In the recent paper \cite{Adams2011}, Adams, Dirr, Peletier and Zimmer
unveiled a fundamental connection between two seemingly unrelated
aspects of diffusion equations. They connected the \emph{large deviation
rate functional} for the empirical measure of a system of
independently diffusing particles to the \emph{entropy gradient flow
  structure} of diffusion equations in the Wasserstein space of
probability measures. Let us informally describe these two concepts
and their connection here, before giving rigorous statements in
Section~\ref{sec:main-results}.

\subsection*{Large deviations for independently diffusing particles}
We consider $n$ indistinguishable particles evolving according to the
stochastic differential equations
\begin{align}\label{eq:SDE}
  {\rm d} X_i(t) = - \nabla \Psi( X_i(t) )\,{\rmd} t +
  \sqrt{2}\,{\rmd} W_i(t)\;,
\end{align}
where $(W_1(t), \ldots, W_n(t))_{t \geq 0}$ is a collection of
independent standard $\R^d$-valued Brownian motions. We assume that
$\Psi : \R^d \to \R$ is twice continuously differentiable and that its
Hessian is uniformly bounded from below. Let
$\rho^{(n)}_t:=n^{-1}\sum_{i=1}^n\delta_{X_i(t)}$ denote the empirical
measure of $(X_{i}(t))_{i=1}^n$.  If the initial values $X_i(0)$ are
chosen deterministically such that $\rho^{(n)}_0$ converges weakly to some
fixed measure $\rho_0\in\calP(\R^d)$, then, for each
$t\geq0$, it is a classical result that the empirical measure $\rho^{(n)}_t$ converges almost surely to the unique solution of the Fokker-Planck equation
\begin{equation}\label{eq:FP}
  \partial_t\rho_t=\Delta\rho_t + \div(\rho_t\nabla\Psi)
\end{equation}
with initial condition $\rho_0$, see, e.g., \cite{Dawson1987,Feng2006} for much stronger results. 
Under suitable growth conditions on $\Psi$, a Sanov-type theorem implies that the random measures $(\rho_t^{(n)})_n$
satisfy a large deviation principle of the form
\begin{align*}
  \P[\rho_t^n \approx \bar\rho] \sim \exp{\big({-}n I_t(\bar \rho|\rho_0)\big)}\;,
\end{align*}
where the rate functional is given by
\begin{align}
  I_t(\bar\rho| \rho_0) := \inf_{\gamma \in \Gamma(\rho_0,\bar\rho)} H(\gamma | \rho_{0,t})\;,
\label{eq:rate-functional relative entropy form}
\end{align}
see \cite[Proposition~3.2]{Leonard2007} and \cite[Theorem~A.1]{Peletier2013}.
Here, $\rho_{0,t} \in \calP(\R^d \times \R^d)$ denotes the joint law
of a solution $(X_0, X_t)$ to \eqref{eq:SDE} with random initial
condition $X_0 \sim \rho_0$ (independent of the Brownian motion),
$H(\cdot|\rho_{0,t})$ denotes the relative entropy with respect to $\rho_{0,t}$, and
$\Gamma(\rho_0,\bar\rho)$ is the set of probability measures $\gamma \in
\calP(\R^d \times \R^d)$ with marginals $\rho_0$ and $\bar\rho$.
For background on large deviation theory we refer the reader to \cite{Dembo1998,Feng2006}.

In this paper we are interested in the short-time behaviour of the
rate functional $I_{t}(\cdot |\rho_{0})$ and its relation to the
Wasserstein gradient structure of the Fokker-Planck equation.

\subsection*{The Wasserstein gradient structure of the
  Fokker-Planck equation}
A seminal result by Jordan-Kinderlehrer-Otto \cite{Jordan1998} asserts
that the Fokker-Planck equation \eqref{eq:FP} can be regarded as the
gradient flow equation of the relative entropy
\begin{align*}
 \calF(\rho) &:= \left\{ \begin{array}{ll}\displaystyle
   \int_{\R^d} \rho(x) \log \rho(x) \dd x + \int_{\R^d} \Psi(x) \rho(x)\dd x
    &\rho({\rm d}x) = \rho(x)\,{\rm d} x\;, \\
 + \infty & \text{otherwise}\;,
\end{array} \right.
\end{align*}
in the Wasserstein space of probability measures $(\calP_2(\R^{d}),
W_{2})$. This result can be rigorously interpreted in different ways,
e.g., using the theory of gradient flows in metric spaces, or using an
infinite-dimensional Riemannian structure on the space of probability
measures; see \cite{Ambrosio2008} for details. Here we present the
original interpretation from \cite{Jordan1998} in terms of the
convergence of a discrete ``minimizing movement'' scheme, which can be
seen as an analogue of the implicit Euler scheme for the gradient flow
equation. For $\rho_{0} \in \calP_2(\R^{d})$ and $t > 0$, define
$J_{t}(\cdot | \rho_{0}) : \calP_2(\R^{d})\to \R \cup \{+\infty\}$ by
\begin{align}
J_{t}(\bar\rho | \rho_{0}) := \calF(\bar\rho) - \calF(\rho_0) + \frac{1}{2t} W_2(\rho_{0}, \bar\rho)^{2}\;,\quad\text{and set}\quad
S_{t}\lbrack\rho_{0}\rbrack := \argmin_{\bar\rho \in \calP(\R^{d})} \,J_{t}(\bar\rho | \rho_{0}) \;.
\label{eq:JKO scheme}
\end{align}
Since this minimisation problem has a unique minimiser, $S_{t}\lbrack
\rho_{0}\rbrack$ is well defined. The JKO-functional $J_{t}$ can be used to
construct an iterative discretisation scheme: it was shown in
\cite{Jordan1998} that
\begin{align*}
 \rho_{t} := \lim_{n \to \infty} \big(S_{{t/n}}\big)^{n}\lbrack\rho_{0}\rbrack
\end{align*}
exists for each $t > 0$ and satisfies the Fokker-Planck equation
\eqref{eq:FP}.

\subsection*{Relating $I_{t}$ and $J_{t}$} 
The main result of \cite{Adams2011} unveils a relation between the
large deviation principle and the Wasserstein gradient flow
structure. Roughly speaking, it asserts that the functionals $I_t$ and
$\frac12J_t$ are asymptotically equivalent as $t\to 0$. More
precisely, it was shown that
\begin{align}\label{eq:Gamma}
 I_{t}(\cdot|\rho_{0}) - \frac{1}{4t} W_{2}(\cdot,\rho_{0})^{2} \to 
 \frac12 \calF(\cdot) - \frac12 \calF(\rho) \qquad \text { as $t \to 0$}\;,
\end{align}
in the sense of $\Gamma$-convergence. This result provides an appealing microscopic explanation
for the emergence of the Wasserstein gradient flow structure at the
macroscopic level.

The proof of this theorem in \cite{Adams2011} required two strong
technical assumptions. Firstly, the result was limited to one space
dimension.  Secondly, the proof required highly restrictive regularity
assumptions on the involved measures. 

In a subsequent paper \cite{Duong2013a}, Duong, Laschos and Renger
were able to remove the strong regularity assumptions. Their approach
is based on a different representation of the rate functional $I_{t}$
due to Dawson and G\"artner \cite{Dawson1987} (see also
\cite{Feng2006}), that we shall describe in Section
\ref{sec:main-results}. The proof of the lower bound in the
$\Gamma$-convergence result in \cite{Duong2013a} is valid in arbitrary
dimensions. However, the remaining part of the argument (the
construction of a recovery sequence) is restricted to one dimension,
since it relies on regularity estimates for optimal transport maps
which are known to be false in multiple dimensions.
 
In this note we shall provide a different argument for the
construction of a recovery sequence that works in arbitrary
dimensions. 
Combined with the result from \cite{Duong2013a}, this completes the proof of \eqref{eq:Gamma} in arbitrary dimensions. We refer to Theorem \ref{th:main result} below for a precise statement.

\subsection*{Structure of the paper}
In Section \ref{sec:main-results} we give a detailed statement of the
main convergence result. In Section \ref{sec:ingredients} we collect
well-known results about Wasserstein gradient flows that will be used
in the proof. Section \ref{sec:proof} contains the proof of the
convergence result. For completeness, we also include the proof of the
lower bound taken from \cite{Duong2013a}.  In the appendix
we provide a short proof of the equivalence of different formulations
of the Benamou--Brenier formula.

\section{Statement of the main result}\label{sec:main-results}

In this section we shall rigorously introduce the three objects
appearing in the main result of this paper: the Wasserstein metric
$W_2$, the relative entropy functional $\F$, and the large deviation
rate functional $I_\tau$.

\subsubsection*{The Wasserstein metric}
Let $\calP_2(\R^d):=\{\rho\in\calP(\R^d):\int\lvert
x\rvert^2\,\rho({\rm d}x)<\infty\}$ denote the set of probability measures
with finite second moment. The $L^2$-Wasserstein distance between
$\rho_0,\rho_1\in\calP_2(\R^d)$ is defined by
\begin{align*}
 &W_2(\rho_0,\rho_1):=\inf_{\pi \in \Gamma(\rho_{0}, \rho_{1})}\bigg( \int_{\R^d \times \R^{d}} | x-y|^2\,\pi(\rmd x,\rmd y)\bigg)^{1/2}\;,
\end{align*}
where the infimum is taken over all couplings $\pi$ of $\rho_0$ and
$\rho_1$, i.e., $\Gamma(\rho_{0},\rho_{1})$ denotes the collection of
all $\pi\in\calP(\R^d\times\R^d)$ with $\pi(\cdot
\times\R^d)=\rho_0(\cdot)$ and $\pi(\R^d\times\cdot)=\rho_1(\cdot)$.

\subsubsection*{The relative entropy}
Throughout this paper we assume that $\Psi : \R^d \to \R$ is twice
continuously differentiable and $\lambda$-convex for some $\lambda \in
\R$, i.e., $\Hess\Psi(x) \geq \lambda\, \mathrm{Id}$ for all $x \in \R^d$.
The relative entropy functional $\F : \calP_2(\R^d) \to \R \cup
\{+\infty\}$ is defined by
\begin{align*}
 \F(\rho) :=  
 \left\{ \begin{array}{ll}\displaystyle
      \int_{\R^d} f(x)\log f(x)\dd x + \int_{\R^d} \Psi(x) f(x)\dd x
 & \text{if } \rho({\rm d} x)=f(x)\dd x\;,\\
+\infty
 & \text{otherwise}\;.\end{array} \right.
\end{align*}
This functional is well-defined, since the assumption on the second
moment implies that the negative parts of $f\log f$ and $\Psi f$ are
integrable with respect to the Lebesgue measure. If $\rho$ is
absolutely continuous with respect to the Lebesgue measure, then $\F$
can be written as a relative entropy with respect to the equilibrium
measure $\nu({\rm d}x) = e^{-\Psi(x)} \dd x$. Namely, 
 \begin{align*}
  \F(\rho) = 
   \int_{\R^d} g(x) \log g(x) \dd \nu(x)\;,
\end{align*}
where $\rho({\rm d} x)=g(x)\nu({\rm d}x)$.

We also introduce the relative Fisher information $\G : \calP_2(\R^d)
\to [0,+\infty]$ defined by
\begin{align*}
 \G(\rho) = 
 \left\{ \begin{array}{ll}\displaystyle
\int_{\{g>0\}} \frac{|\nabla g(x)|^2}{g(x)}\dd \nu(x)
& \text{if } \rho({\rm d}x)=g(x) \nu({\rm d}x),~g\in W_{\rm loc}^{1,1}(\R^d) \;,\\
+\infty
 & \text{otherwise}\;.\end{array} \right. 
\end{align*}

\subsubsection*{The large deviation rate functional}

The definition of the rate functional $I_\tau$ involves a weighted
Sobolev norm of negative order 1. Let $\D=C^\infty_c(\R^d)$ be the
space of test functions and let $\D'$ be the dual space of
distributions. Given $\rho\in\calP(\R^d)$, we define the weighted
$H^{-1}(\rho)$-norm of $s\in\D'$ by the duality formula
\begin{align*}
 \| s \|_{-1,\rho}^{2} := \sup_{f\in \D} \displaystyle \frac{\langle s,f\rangle^{2}}{\displaystyle \int_{{\R^{d}}} |\nabla f |^2\dd\rho}\;,
\end{align*}
where the supremum runs over all smooth test functions $f \in \D$ for
which the denominator does not vanish. Using the identity $b^{2}/a^{2}
= \sup_{t \in \R} 2t b - t^{2}a^{2}$, one obtains the equivalent
formula
\begin{align*}
\lVert s\rVert^2_{-1,\rho} =\sup_{f\in \D} \bigg\{ 2\langle s,f\rangle - \int\!\lvert\nabla f\rvert^2\dd\rho \bigg\}\;.
\end{align*} 

For fixed $\rho_0 \in \calP_2(\R^d)$ and $\tau > 0$, the functional
$I_\tau(\cdot| \rho_0) : \calP_2(\R^d) \to [0,+\infty]$ is 
defined by
\begin{align}\label{eq:rate-functional}
 I_\tau(\bar\rho| \rho_0) :=  \inf_{(\rho_t)_{t}\in\AC^2(\rho_0,\bar\rho)} \frac1{4\tau}\int_0^1 \big\|\partial_t\rho_t-\tau\Delta\rho_t-\tau\div(\rho_t\nabla\Psi)\big\|^2_{-1,\rho_t}\dd t\;,
\end{align}
where $\AC^2(\rho_0,\rho_1)$ denotes the set of 2-absolutely continuous curves
$(\rho_t)_{t\in[0,1]}$ in $\big(\calP_2(\R^d),W_2\big)$ with boundary
conditions $\rho|_{t = 0} = \rho_{0}$ and $\rho|_{t = 1} =
\rho_{1}$. We refer to Section \ref{sec:ingredients} for the definition of 2-absolutely continuity.
Intuitively, $I_\tau(\bar\rho| \rho_0)$ is the value of an
optimal control problem, which requires to interpolate between
$\rho_0$ and $\bar\rho$ in such a way that deviations from the
Fokker-Planck equation
\begin{align*}
 \partial_t\rho_t = \tau\Delta\rho_t + \tau\div(\rho_t\nabla\Psi)
\end{align*}
are minimised.

\begin{remark}\label{rem:def-consistent}
  Under two different sets of growth conditions on the potential
  $\Psi$, coined `subquadratic' and `superquadratic', the term inside
  the infimum of \eqref{eq:rate-functional} is the large deviation
  rate functional for trajectories $[0,\tau]\to\calP(\R^d)$ of the
  empirical measure of independent particles, see \cite{Dawson1987}.
    Using the contraction principle, it was proved in \cite[Cor.~4.10]{Duong2013a} that the  large deviation rate functional for the empirical measure at the end time $\tau$ is obtained by taking the infimum over (1-)absolutely continuous curves in $(\calP_2(\R^d), W_2)$ with the right boundary conditions. In the subquadratic case, it follows from the proof of \cite[Prop.~4.6]{Duong2013a} that if $\rho_0\in\calP_2(\R^d)$ and
    $\F(\rho_0)<\infty$, any weakly continuous curve with
    $\int_0^\tau\lVert \partial_t\rho_t-\Delta\rho_t-\div(\rho_t\nabla\Psi)\rVert^2\dd
    t<\infty$, is also 2-Wasserstein absolutely continuous. 
    In the superquadratic case, the same result was proved in
    \cite[Lem.~2.1]{Feng2012}. Therefore, under both sets of
    conditions on $\Psi$, we can take the infimum over 2-absolutely
    continuous curves in $(\calP_2(\R^d), W_2)$, hence the large deviation rate functional
    \eqref{eq:rate-functional relative entropy form} coincides with
    \eqref{eq:rate-functional}. In the rest of this paper we will not be
  concerned with the exact conditions under which these expressions
  coincide, but rather take \eqref{eq:rate-functional} as the object
  of study. For more details, see \cite[Section~4]{Duong2013a}.
\end{remark}

Now we are ready to state the main theorem of this paper:

\begin{theorem}[Main result]\label{th:main result}
  Let $\Psi \in C^2(\R^d)$ be $\lambda$-convex for some $\lambda \in
  \R$.  Then, for every $\rho_0\in\calP_2(\R^d)$ such that
  $\G(\rho_0)<\infty$, we have
  \begin{equation}\label{eq:expansion Mosco convergence}
  I_\tau(\,\cdot\mid\rho_0) - \frac{W_2^2(\rho_0,\,\cdot\,)}{4\tau} \xrightarrow[\tau\to0]{\Gamma} \frac12\F(\,\cdot\,) - \frac12\F(\rho_0)
\end{equation}
in the sense of $\Gamma$-convergence. More precisely:
\begin{itemize}
\item[(i)] For any $\rho_1 \in \calP_2(\R^n)$ and any sequence $\{\rho_1^\tau\}_\tau\subseteq\calP_2(\R^d)$ converging to $\rho_1$ in the $2$-Wasserstein metric, we have
\begin{equation}\label{eq:lower bound}
  \liminf_{\tau\rightarrow 0} \left(I_{\tau}(\rho_1^\tau\mid\rho_0)-\frac{W_{2}^{2}(\rho_{0},\rho_1^\tau)}{4\tau} \right)~\geq~ \frac{1}{2}\mathcal{F}(\rho_1)-\frac{1}{2}\mathcal{F}(\rho_{0})\;.
  \end{equation}
In addition, if
    $\nu(\R^d)=\int_{\R^d}e^{-\Psi(x)}\dd x<\infty$, then the lower
    bound~\eqref{eq:lower bound} also holds for any weakly
    converging sequence  $\{\rho_1^\tau\}_\tau\subseteq\calP_2(\R^d)$.
\item[(ii)] For any $\rho_1\in\Ptwo(\R^d)$ there exists a sequence
  $\{\rho_1^\tau\}_\tau\subseteq\calP_2(\R^d)$ converging to $\rho_1$ in the
  $2$-Wasserstein metric such that
\begin{equation}
  \label{eq:recovery sequence2}
  \limsup_{\tau\rightarrow 0}\left( I_{\tau}(\rho_1^\tau\mid\rho_0)-\frac{W_{2}^{2}(\rho_{0},\rho_1^\tau)}{4\tau}\right) ~\leq~ \frac{1}{2}\mathcal{F}(\rho_1)-\frac{1}{2}\mathcal{F}(\rho_{0})\;.
\end{equation}
\end{itemize}
\end{theorem}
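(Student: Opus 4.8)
The plan is to prove the two inequalities separately. The lower bound (i) is already available from \cite{Duong2013a}, so the real work is the construction of a recovery sequence in (ii); I focus on that. The key idea is to avoid optimal transport maps entirely and instead use the \emph{Wasserstein gradient flow of the entropy} $\F$ itself as the interpolating curve. Concretely, for $\rho_1 \in \Ptwo(\R^d)$, I would first reduce to the case $\F(\rho_1) < \infty$ (otherwise the right-hand side of \eqref{eq:recovery sequence2} is $+\infty$ and there is nothing to prove, taking $\rho_1^\tau \equiv \rho_1$). Then let $(\sigma_s)_{s\ge 0}$ denote the gradient flow of $\F$ in $(\Ptwo(\R^d), W_2)$ started from $\rho_1$ — equivalently, the solution of the Fokker--Planck equation \eqref{eq:FP} with initial datum $\rho_1$. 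Run it \emph{backwards} for a small time: set $\rho_1^\tau := \sigma_{h(\tau)}$ for a suitable time scale $h(\tau) \to 0$ to be chosen, and take as competitor in \eqref{eq:rate-functional} the curve that first flows from $\rho_0$ to $\rho_1^\tau$ along a $W_2$-geodesic and then, reparametrised, reverses part of the entropy flow to land at $\rho_1$. Actually it is cleaner to run the flow forward: take $\rho_1^\tau := \sigma_{h(\tau)}$ and build the competitor $(\rho_t)_{t\in[0,1]}$ by concatenating a geodesic from $\rho_0$ to $\rho_1$ on $[0,1-\delta]$ with the \emph{time-reversed} entropy flow from $\sigma_{h(\tau)}$ back to $\rho_1$ on $[1-\delta,1]$; one must then check $\rho_1^\tau \to \rho_1$ in $W_2$, which follows from the $W_2$-continuity of the gradient flow.

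The estimate of $I_\tau(\rho_1^\tau\mid\rho_0)$ splits along this concatenation. On the geodesic piece, $\partial_t \rho_t - \tau\Delta\rho_t - \tau\div(\rho_t\nabla\Psi)$ has its first term of order $W_2(\rho_0,\rho_1)$ (the geodesic speed) and the two $\tau$-terms of order $\tau$; squaring and dividing by $4\tau$ gives the geodesic energy contribution $\frac{1}{4\tau}W_2^2(\rho_0,\rho_1)$ plus cross terms. The crucial point is the cross term: by the Benamou--Brenier characterisation of $W_2$ and the chain rule for $\F$ along the geodesic, $-\frac{1}{2}\int \langle$ velocity field$, \nabla(\log\rho + \Psi)\rangle \dd\rho_t \dd t$ telescopes to $-\frac12(\F(\rho_1)-\F(\rho_0))$, up to a sign — which is exactly the constant we need, \emph{minus} $\frac{1}{4t}W_2^2$ in the statement \eqref{eq:recovery sequence2}. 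Here $\G(\rho_0) < \infty$ guarantees the relevant integrand is finite at the $\rho_0$ endpoint. On the entropy-flow piece, the curve $\sigma_s$ solves the PDE \emph{exactly}, so after time-reversal the quantity $\partial_t\rho_t + \tau\Delta\rho_t + \tau\div(\rho_t\nabla\Psi)$ (note the sign flip from reversal) equals $2\tau(\Delta\sigma + \div(\sigma\nabla\Psi))$, whose $\| \cdot \|_{-1,\sigma}^2$-norm is $4\tau^2$ times the Fisher information-type quantity $|\dot\sigma_s|^2$ (the metric speed of the entropy flow), integrated; this contributes $O(\tau)\cdot h(\tau) \cdot \sup_s \G(\sigma_s)$, which vanishes if $h(\tau)$ is chosen to go to $0$ and if the Fisher information stays controlled along the flow — and indeed, since $\G(\rho_0)<\infty$ and $\F$ decreases along its gradient flow, the entropy production estimate $\int_0^S \G(\sigma_s)\dd s = \F(\sigma_0) - \F(\sigma_S) \le \F(\rho_1) - \F(\rho_0) < \infty$ handles this. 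One also needs the geodesic-vs-entropy-flow cross terms between the two pieces to be negligible; by Cauchy--Schwarz these are bounded by a product of the two contributions, hence $O(\tau^{1/2}) \cdot o(1)$.

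\textbf{The main obstacle} I anticipate is the rigorous handling of the chain rule and the regularity needed to make the $-\frac12(\F(\rho_1)-\F(\rho_0))$ telescoping identity precise along a merely 2-absolutely continuous geodesic: one needs $\F$ to be along such curves an absolutely continuous function of $t$ with the expected derivative $\int \langle v_t, \nabla\delta\F/\delta\rho\rangle\dd\rho_t$, and this is exactly where hypotheses like $\G(\rho_0)<\infty$ and the $\lambda$-convexity of $\Psi$ (hence of $\F$) enter — $\lambda$-convexity along geodesics gives the one-sided differentiability and the absolute continuity of $t\mapsto\F(\rho_t)$ via the theory recalled in Section~\ref{sec:ingredients}. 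A secondary technical point is that the geodesic from $\rho_0$ to $\rho_1$ need not have finite entropy at intermediate times a priori; but $\lambda$-convexity of $\F$ along $W_2$-geodesics gives $\F(\rho_t) \le (1-t)\F(\rho_0) + t\F(\rho_1) - \frac{\lambda}{2}t(1-t)W_2^2(\rho_0,\rho_1)<\infty$, so this is not a real difficulty. If instead one wanted to run the entropy flow for a \emph{fixed} positive time and not send $h(\tau)\to0$, the error on that piece would not vanish; the balance $h(\tau)\to 0$ slowly enough that $\rho_1^\tau\to\rho_1$ yet fast enough that $\tau\, h(\tau)\to 0$ is automatic, so the only genuine analytic content is the chain-rule identity.
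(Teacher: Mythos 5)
Your outline for part (ii) has the right general flavour (regularise by the Fokker--Planck semigroup, expand the square in \eqref{eq:rate-functional}, identify the cross term with $\tfrac12(\F(\rho_0)-\F(\rho_1))$ via the chain rule), but it regularises the wrong object, and this creates a genuine gap. Your competitor keeps the \emph{unregularised} geodesic from $\rho_0$ to $\rho_1$ on the bulk of the time interval and only smooths the endpoint $\rho_1$ by the entropy flow. Expanding the square along this curve produces the term $\frac{\tau}{4}\int \G(\rho_t)\dd t$ and requires the identity \eqref{eq:derivative F} for the cross term; both need $\int \G(\rho_t)\dd t<\infty$ \emph{along the curve}. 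The Fisher information is not displacement convex (this is exactly the difficulty emphasised in Section~\ref{sec:ingredients}), so $\G(\rho_t)$ may be $+\infty$ at intermediate times of the geodesic even though $\G(\rho_0),\G(\rho_1)<\infty$; finiteness at the $\rho_0$ endpoint and $\lambda$-convexity of $\F$ give absolute continuity of $t\mapsto\F(\rho_t)$ but not the representation of its derivative as $\langle\partial_t\rho_t,\cL\rho_t\rangle_{-1,\rho_t}$, nor integrability of $\G$ along the geodesic. Consequently the bound on $I_\tau$ obtained from your competitor may simply be $+\infty$. Your entropy-production estimate $\int_0^S\G(\sigma_s)\dd s=\F(\sigma_0)-\F(\sigma_S)$ controls $\G$ only along the gradient-flow piece, not along the geodesic piece, so it does not close this hole. (There is also a bookkeeping slip: the curve you describe ends at $\rho_1$, not at your recovery point $\rho_1^\tau=\sigma_{h(\tau)}$, and the sign analysis on the reversed piece produces $(1\pm\tau c)\cL\sigma$ rather than $2\tau\cL\sigma$; these are repairable, the missing regularisation is not.)

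The paper's proof fixes precisely this point: it applies $P_\eps$ to the \emph{whole} geodesic (with short flow pieces from $\rho_0$ and into $\rho_1$ glued at the ends so the endpoints are exact), which makes $h(\eps)=\int_0^1\G(P_\eps\rho_t)\dd t$ finite for every $\eps>0$ via \eqref{eq:G-est} and the uniform convergence \eqref{eq:uniform-ent}. The remaining — and genuinely nontrivial — step, which your proposal dismisses as ``automatic,'' is the choice of the coupling $\eps=\eps(\tau)$: since $h(\eps)$ may blow up as $\eps\to0$, one must select $\eps(\tau)$ (in the paper via the generalised inverse of $\eps\mapsto\sqrt{\eps/h(\eps)}$, after checking monotonicity and lower semicontinuity of $h$) so that simultaneously $\eps(\tau)/\tau\to0$ and $\tau\,h(\eps(\tau))\to0$, the latter resting on the fact that $\eps\,h(\eps)\to0$, which again uses \eqref{eq:uniform-ent}. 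Without regularising the interior of the geodesic and without this balancing argument, the upper bound does not go through; the reduction to $\G(\rho_1)<\infty$ by flowing $\rho_1$ (your $\sigma_{h(\tau)}$, the paper's Claim~\ref{claim:b} plus a diagonal argument) is the easy part and is the only part your proposal actually carries out.
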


As discussed in the introduction, this theorem was first proved in
dimension $1$ in \cite{Adams2011} under more restrictive conditions on
the measures $\rho_0$ and $\rho_1$. Part (i) has been extended to arbitrary dimensions in \cite{Duong2013a}. The novel contribution of our paper is a proof of (ii) in arbitrary dimensions.

\begin{remark}\label{rem:Entropy-finite}
  The right-hand side in \eqref{eq:lower bound} and
  \eqref{eq:recovery sequence2} is well-defined in $\R \cup \{ + \infty\}$, since our
  assumptions on $\rho_0$ imply that $\calF(\rho_0) < \infty$. This is
  a consequence of the HWI-inequality by Otto and Villani \cite{OV00}
  (see also \cite[Corollary 20.13]{Vil09}), which asserts that
  $\F(\rho) \leq W_2(\rho, \nu)\sqrt{\G(\rho)} -
  \frac{\lambda}{2}W_2^2(\rho,\nu)$.
\end{remark}

\section{Ingredients of the proof}
\label{sec:ingredients}

\subsection*{The Benamou--Brenier formula}

It will be convenient to work with the dynamic characterisation of the
Wasserstein distance due to Benamou--Brenier \cite{Benamou2000}, which 
asserts that, for $\rho_{0}, \rho_{1} \in \calP_{2}(\R^{d})$,
\begin{align}\label{eq:BB}
  W^2_2(\rho_0,\rho_1)~=~ \inf\limits_{(\rho_t)_t \in \AC^2(\rho_0,\rho_1)}\left\{\int_0^1\lVert \partial_t\rho_t\rVert^2_{-1,\rho_t}\dd t\right\}\;,
\end{align}
For $p \geq 1$, recall that a curve $(\rho_t)_{t\in[0,1]}$ is
said to be \emph{$p$-absolutely continuous with respect to $W_2$}, if there exists a scalar function $m \in L^p(0,1)$ satisfying
$W_2(\rho_s, \rho_t) \leq \int_s^t m(r) \dd r$ for all $0 \leq s < t
\leq 1$. We use the notation $(\rho_t)_t \in \AC^p(\rho_0,\rho_1)$. If $p = 1$, we simply say that $(\rho_t)_{t\in[0,1]}$ is
absolutely continuous. In this case, the metric derivative
\begin{align*}
  |\dot\rho_t| := \lim\limits_{h\to0}\frac{W_2(\rho_{t+h},\rho_t)}{h}
\end{align*}
exists for a.e. $t \in (0,1)$, see, e.g., \cite[Theorem
1.1.2]{Ambrosio2008} for more details.
It can be shown that \eqref{eq:BB} implies the identity
\begin{align}\label{eq:speed}
  |\dot\rho_t| =  \lVert \partial_t\rho_t\rVert_{-1,\rho_t}\;.
\end{align}
We refer to Appendix \ref{sec:BB} for an equivalent formulation of the
Benamou--Brenier formula which is commonly used in the literature on
optimal transport and to \cite[Theorem 8.3.1]{Ambrosio2008} for a proof
of \eqref{eq:BB}, \eqref{eq:speed} in this formulation.

\subsection*{Relative entropy, Fisher information, and heat flow}

A seminal result by McCann \cite{McCann1997} asserts that the
$\lambda$-convexity of $\Psi$ implies \emph{displacement
  $\lambda$-convexity} of $\F$, see also \cite[Theorem
5.15]{Villani2003}. This means that for any constant speed
$W_2$-geodesic $(\rho_t)_{t\in[0,1]} \subseteq \calP_2(\R^d)$ and any
$t \in [0,1]$, we have
\begin{align}\label{eq:displacement-convexity-1}
\F(\rho_t) &\leq (1-t)\F(\rho_0) +t\F(\rho_1)
   -\frac\lambda2 t(1-t)W^2_2(\rho_0,\rho_1)\;.
\end{align}
In particular, $\F$ is finite along geodesics as soon as it is finite
at the endpoints. The fact that the relative Fisher-information does
\emph{not} enjoy this property is the source of several complications
in \cite{Duong2013a}. We recall further that $\F$ is
  lower semicontinuous with respect to $W_2$-convergence, see
  \cite[Remark 9.4.2 and Lemma 9.4.3]{Ambrosio2008}.

The semigroup associated to the Fokker--Planck equation~\eqref{eq:FP}
will be denoted by $(P_t)_{t\geq0}$. More precisely, for
$\rho\in\calP_2(\R^d)$ we set $P_t \rho :=\rho_t$, where $(\rho_t)_t$ is
the unique distributional solution to the Fokker--Planck equation
\eqref{eq:FP} with $\rho_0=\rho$. This solution can be obtained using,
e.g., the metric theory of gradient flows for (generalised)
$\lambda$-convex functionals, see \cite[Thm. 11.2.8]{Ambrosio2008}.
 
In the following result we collect some well-known results on the
behaviour of the semigroup $(P_t)_{t \geq 0}$.

\begin{lemma}\label{lem:heat-flow-bounds}
The following assertions hold:
\begin{enumerate}
\item The curve $t\mapsto P_t\rho$ is continuous on $[0,\infty)$ and
  locally absolutely continuous on $(0,\infty)$ with respect to $W_2$.
\item For all $\rho,\sigma\in\calP_2(\R^d)$ and all $t \geq 0$ we have
  the contraction estimate:
  \begin{align}
    \label{eq:contraction}
    W_2(P_t\rho,P_t \sigma)&\leq e^{-\lambda t}W_2(\rho,\sigma)\;.
  \end{align} 
  Moreover, for any curve $(\rho_s)_{s}$ that is absolutely continuous
  with respect to $W_2$ we have
  \begin{align}
    \label{eq:action-bounds}
    \|\partial_s(P_t\rho_s)\|_{-1,P_t\rho_s} &\leq e^{-\lambda t}\|\partial_s\rho_s\|_{-1,\rho_s}\;.
  \end{align}

 \item 
  For all $\rho \in \calP_2(\R^d)$ and $t>0$ we have 
  \begin{align}\label{eq:instant-reg}
  \F(P_t\rho)< \infty\;,\quad\G(P_t\rho)<\infty\;,
 \end{align}
  as well as the bounds
  \begin{align}
    \label{eq:heat-flow-Fisher-bounds}
    \F(P_t \rho) \leq \F(\rho)\;,\quad
        \G(P_t\rho)  \leq e^{-2\lambda t}\G(\rho) \;.
  \end{align}
  Finally, for any $W_2$-geodesic $(\rho_s)_{s\in[0,1]}$ with
  $\F(\rho_0),\F(\rho_1)<\infty$,
  we have as $t\searrow0$:
  \begin{align}\label{eq:uniform-ent}
    \F(P_t\lbrack\rho_s\rbrack)~\nearrow~\F(\rho_s) \quad\text{ uniformly for } s\in[0,1]\;.
  \end{align}
  \end{enumerate}
\end{lemma}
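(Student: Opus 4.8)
\emph{Strategy.} The plan is to derive all four assertions from the standard theory of gradient flows of geodesically $\lambda$-convex functionals in $(\calP_2(\R^d),W_2)$, applied to $\F$, which is proper, lower semicontinuous (as recalled above) and displacement $\lambda$-convex by \eqref{eq:displacement-convexity-1}; see \cite[Chapters~10 and 11]{Ambrosio2008}, recalling that $(P_t)_{t\geq0}$ is precisely its metric gradient flow by \cite[Theorem~11.2.8]{Ambrosio2008}. Assertion~(1) is then the standard existence statement together with the regularising effect of the flow: $t\mapsto P_t\rho$ is continuous up to $t=0$ and locally absolutely continuous (indeed locally Lipschitz) on $(0,\infty)$. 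For assertion~(2), the contraction estimate \eqref{eq:contraction} is the $\lambda$-contractivity characteristic of the gradient flow of a displacement $\lambda$-convex functional. To deduce \eqref{eq:action-bounds}, fix $t\geq0$ and an absolutely continuous curve $(\rho_s)_s$; applying \eqref{eq:contraction} to the pairs $\rho_s,\rho_{s'}$ shows that $s\mapsto P_t\rho_s$ is again absolutely continuous with $W_2(P_t\rho_s,P_t\rho_{s'})\leq e^{-\lambda t}W_2(\rho_s,\rho_{s'})$, so dividing by $|s-s'|$ and letting $s'\to s$ gives $|\dot{(P_t\rho)}_s|\leq e^{-\lambda t}|\dot\rho_s|$ for a.e.\ $s$, whence \eqref{eq:action-bounds} follows from \eqref{eq:speed}.

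For assertion~(3), the inequality $\F(P_t\rho)\leq\F(\rho)$ expresses the dissipation of energy along the flow (so that, by the semigroup property $P_{t+h}=P_t\circ P_h$, the map $t\mapsto\F(P_t\rho)$ is non-increasing), and the instantaneous finiteness $\F(P_t\rho)<\infty$, $\G(P_t\rho)<\infty$ for $t>0$ is the regularising estimate, available since $\overline{D(\F)}^{\,W_2}=\calP_2(\R^d)$; throughout we use the identification of the metric slope of $\F$ with the Fisher information, $|\partial\F|^2=\G$, see \cite[Theorem~10.4.17]{Ambrosio2008}. The bound $\G(P_t\rho)\leq e^{-2\lambda t}\G(\rho)$ then follows from the contraction of the slope along the flow, $|\partial\F|(P_t\rho)\leq e^{-\lambda t}|\partial\F|(\rho)$. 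The one point that needs a genuine argument is the uniform convergence \eqref{eq:uniform-ent}. Fix a $W_2$-geodesic $(\rho_s)_{s\in[0,1]}$ with finite endpoint entropies, so that $\F(\rho_s)<\infty$ for every $s$ by \eqref{eq:displacement-convexity-1}. By monotonicity of $t\mapsto\F(P_t\rho_s)$, the lower semicontinuity of $\F$, and the continuity at $t=0$ from (1), we obtain the pointwise monotone convergence $\F(P_t\rho_s)\nearrow\F(\rho_s)$ as $t\searrow0$ for each $s$. Set $g_n(s):=\F(\rho_s)-\F(P_{1/n}\rho_s)\geq0$; then $g_n\downarrow0$ pointwise on $[0,1]$. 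Each $g_n$ is upper semicontinuous: $s\mapsto\F(P_{1/n}\rho_s)$ is lower semicontinuous, being the composition of the $W_2$-continuous map $s\mapsto P_{1/n}\rho_s$ with the lower semicontinuous $\F$; and $s\mapsto\F(\rho_s)$ is upper semicontinuous because, by \eqref{eq:displacement-convexity-1}, the function $s\mapsto\F(\rho_s)+\tfrac{\lambda}{2}s(1-s)W_2^2(\rho_0,\rho_1)$ is convex and finite on $[0,1]$, hence continuous on $(0,1)$ and upper semicontinuous on $[0,1]$. A Dini-type argument now finishes the proof: for any $\varepsilon>0$ the open sets $\{s\in[0,1]:g_n(s)<\varepsilon\}$ increase and cover the compact interval $[0,1]$, so $g_N<\varepsilon$ on $[0,1]$ for some $N$; thus $\sup_{s\in[0,1]}g_n(s)\to0$, and monotonicity in $t$ upgrades this to \eqref{eq:uniform-ent}.

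I expect the only real obstacle to be this last uniformity statement. It cannot be obtained by integrating an energy-dissipation identity over $[0,t]$, because the slopes $\G(P_t\rho_s)=|\partial\F|^2(P_t\rho_s)$ may blow up as $t\to0$ at interior points $s$ of the geodesic --- a manifestation of the fact, noted above, that $\G$ (unlike $\F$) is \emph{not} displacement convex. The semicontinuity--compactness argument sketched above avoids this altogether, requiring only the upper semicontinuity of $s\mapsto\F(\rho_s)$, which displacement convexity supplies for free.
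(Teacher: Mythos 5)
Your proposal is correct, and for the standard assertions it coincides with the paper's proof: parts (1), (2) and the bounds \eqref{eq:instant-reg}, \eqref{eq:heat-flow-Fisher-bounds} are taken from the metric theory of $\lambda$-convex gradient flows in \cite{Ambrosio2008} (the paper cites Theorems 11.2.1 and 11.2.8; your route through the slope identification $|\partial\F|^2=\G$ is an equally legitimate bookkeeping of the same results), and \eqref{eq:action-bounds} is deduced from \eqref{eq:contraction} and \eqref{eq:speed} exactly as in the paper. The one place where you genuinely diverge is the uniformity statement \eqref{eq:uniform-ent}. The paper argues by contradiction: assuming $\F(\rho_{s_k})-\F(P_{t_k}\rho_{s_k})\geq\eps$ with $t_k\to0$, it extracts $s_k\to s_0$, shows $P_{t_k}\rho_{s_k}\to\rho_{s_0}$ in $W_2$ via the contraction estimate and the continuity of the semigroup at $t=0$, and then combines the continuity of $s\mapsto\F(\rho_s)$ (from \eqref{eq:displacement-convexity-1} together with lower semicontinuity of $\F$) with the $W_2$-lower semicontinuity of $\F$ along the diagonal sequence. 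Your Dini-type argument uses the same raw ingredients (contraction to get $W_2$-continuity of $s\mapsto P_{1/n}\rho_s$, lower semicontinuity of $\F$, monotonicity of $t\mapsto\F(P_t\rho)$, and displacement convexity), but packages them as monotone pointwise convergence of upper semicontinuous functions $g_n$ on the compact interval; this needs only \emph{upper} semicontinuity of $s\mapsto\F(\rho_s)$ rather than continuity, and avoids handling a diagonal sequence, at the price of invoking the usc version of Dini's theorem. Two small points you should make explicit if you write this up: the convexity of $s\mapsto\F(\rho_s)+\tfrac{\lambda}{2}s(1-s)W_2^2(\rho_0,\rho_1)$ follows from \eqref{eq:displacement-convexity-1} only after applying it to the reparametrized restrictions of the geodesic to subintervals (which are again constant-speed geodesics), and both terms of $g_n$ are finite real numbers on $\calP_2(\R^d)$, so no $\infty-\infty$ ambiguity arises in the usc claim. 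Your closing remark is also consistent with the paper: the dissipation identity is not used to prove \eqref{eq:uniform-ent}; rather, in the proof of Claim \ref{claim:a} the paper runs the logic the other way, using \eqref{eq:uniform-ent} to control the right-hand side of \eqref{eq:G-est}.
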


\begin{proof}
  For part (1) and the properties \eqref{eq:contraction},
  \eqref{eq:instant-reg} and \eqref{eq:heat-flow-Fisher-bounds}, see
  \cite[Theorems 11.2.1 and 11.2.8]{Ambrosio2008}. The estimate
  \eqref{eq:action-bounds} follows immediately from
  \eqref{eq:contraction} and \eqref{eq:speed}. It remains to prove the
  statement \eqref{eq:uniform-ent}, which is less standard. Note first
  that by \eqref{eq:displacement-convexity-1} we have that $s\mapsto
  \F(\rho_s)$ is continuous and bounded. Our aim is to show that for
  every $\eps > 0$ there exists $\delta > 0$ such that
  $\F(\rho_{s})-\F(P_{t}\rho_{s}) < \eps$ whenever $t < \delta$ and $s
  \in [0,1]$.  Assume the contrary, i.e., that there exist $\eps>0$
  and sequences $t_k\to0$ and $(s_k)\subset[0,1]$ such that for all
  $k$,
  \begin{align}\label{eq:contra1}
    \F(\rho_{s_k})-\F(P_{t_k}\rho_{s_k})~\geq~\eps\;.
  \end{align}
  By compactness we can assume that $s_k\to s_0$ as $k\to\infty$ for some
  $s_0\in[0,1]$. We claim that $P_{t_k}\rho_{s_k}\to \rho_{s_0}$ in $W_2$-distance as $k \to \infty$. Indeed, again by
  \eqref{eq:contraction} the triangle inequality yields
  \begin{align*}
    W_2(\rho_{s_0},P_{t_k}\rho_{s_k})~&\leq~W_2(\rho_{s_0},P_{t_k}\rho_{s_0}) + W_2(P_{t_k}\rho_{s_0},P_{t_k}\rho_{s_k})\\
    &\leq~W_2(\rho_{s_0},P_{t_k}\rho_{s_0}) + e^{-\lambda t_k}W_2(\rho_{s_0},\rho_{s_k})\;,
  \end{align*}
  and the claim follows from the continuity of $P_t$ at $t=0$ and the
  continuity of the curve $(\rho_s)$. Passing to the limit $k\to\infty$ in
  \eqref{eq:contra1}, using the continuity of $s\mapsto\F(\rho_s)$ and the
  lower semicontinuity of $\F$ with respect to $W_2$, we obtain the following
  contradiction:
  \begin{align*}
    0~=~\F(\rho_{s_0})-\F(\rho_{s_0})~\geq~\limsup\limits_{k\to\infty}
    \Big(\F(\rho_{s_k}) - \F(P_{t_k}\rho_{s_k})\Big)~\geq~\eps\;,
  \end{align*}
  which completes the proof.
\end{proof}

We conclude this section by stating some useful identities for the derivative of the entropy.
In fact, for any absolutely continuous curve $(\rho_t)_{t\in[0,1]}$ with
$\F(\rho_t)\in\R$ for all $t$ and $\int_0^1\G(\rho_t)\dd t<\infty$ we
have that $t\mapsto\F(\rho_t)$ is absolutely continuous with
\begin{align}\label{eq:derivative F}
    \frac{\rm d}{{\rm d} t}\mathcal{F}(\rho_t)~=~ -\big\langle\partial_t\rho_t,\Delta\rho_{t}+\text{div}(\rho_{t}\nabla\Psi)
   \big\rangle_{-1,\rho_{t}}
\end{align}
for a.e. $t\in[0,1]$, see \cite[Lemma~2.3]{Duong2013a}. In particular,
if $\rho_t$ satisfies the Fokker-Planck equation we have
\begin{align}\label{eq:dissipation}
    -\, \frac{\rm d}{{\rm d} t}\mathcal{F}(\rho_t)
      = \| \Delta\rho_{t}+\text{div}(\rho_{t}\nabla\Psi)\|_{-1,\rho_t}^2= \G(\rho_t)\;,
\end{align}
where the second equality follows from~\eqref{eq:norm-explicit}.

\section{Proof of the main result}
\label{sec:proof}

\subsection{Upper bound}\label{sec:upper}

In this section we prove existence of the recovery sequence, i.e.,
statement (ii) of Theorem \ref{th:main result}. For this purpose we
define the set $Q:= \big\{\rho\in \mathcal{P}_{2}(\R^d):
\G(\rho)<\infty\big\}.$ Note that $\F(\rho) < \infty$ for all $\rho
\in Q$ in view of Remark \ref{rem:Entropy-finite}. Below we will prove
the following two claims:
\begin{claim}\label{claim:a}
For all $\rho_0, \rho_1\in Q$ we have as $\tau\to0$,
\begin{align} \label{eq:condition a}
  I_\tau(\rho_1\mid\rho_0)-\frac{1}{4\tau}W_2^2(\rho_{0},\rho_1) \to
  \frac12 \mathcal{F}(\rho_1)-\frac12\mathcal{F}(\rho_0)\;.
\end{align}
\end{claim}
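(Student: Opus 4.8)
The plan is to prove Claim~\ref{claim:a} by combining the lower bound from part~(i) of Theorem~\ref{th:main result} with a matching upper bound obtained from an explicit competitor curve in the definition~\eqref{eq:rate-functional} of $I_\tau$. Since the lower bound gives $\liminf_{\tau\to0}\big(I_\tau(\rho_1\mid\rho_0)-\frac1{4\tau}W_2^2(\rho_0,\rho_1)\big)\geq \frac12\F(\rho_1)-\frac12\F(\rho_0)$ (here the constant sequence $\rho_1^\tau\equiv\rho_1$ trivially converges to $\rho_1$ in $W_2$), the whole content is to construct, for each small $\tau$, a curve $(\rho_t^\tau)_{t\in[0,1]}\in\AC^2(\rho_0,\rho_1)$ whose rate-functional cost is at most $\frac1{4\tau}W_2^2(\rho_0,\rho_1)+\frac12\F(\rho_1)-\frac12\F(\rho_0)+o(1)$ as $\tau\to0$.

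The natural competitor is a time-reparametrised concatenation of a Wasserstein geodesic with a short piece of the Fokker--Planck flow, an idea already implicit in the one-dimensional arguments. Concretely, let $(\sigma_s)_{s\in[0,1]}$ be the constant-speed $W_2$-geodesic from $\rho_0$ to $\rho_1$; since $\rho_0,\rho_1\in Q$ we have $\F(\rho_0),\F(\rho_1)<\infty$, so by displacement convexity~\eqref{eq:displacement-convexity-1} the entropy is bounded along this geodesic. I would first reparametrise the geodesic so that it is traversed on a time interval of length $\sim\tau$ close to one endpoint, and on the remaining interval insert the Fokker--Planck evolution $P$. The point is that along the exact Fokker--Planck flow the integrand $\|\partial_t\rho_t-\tau\Delta\rho_t-\tau\div(\rho_t\nabla\Psi)\|_{-1,\rho_t}^2$ vanishes (after matching the time scaling), so that portion contributes nothing, while along the geodesic part $\partial_t\sigma$ has controlled $\|\cdot\|_{-1,\sigma_t}$-norm equal to $W_2(\rho_0,\rho_1)$ (by~\eqref{eq:speed}), and the cross terms coming from $-\tau\Delta-\tau\div(\cdot\nabla\Psi)$ produce, via the identity~\eqref{eq:derivative F} for $\frac{\rmd}{\rmd t}\F$, exactly the entropy difference $\frac12\F(\rho_1)-\frac12\F(\rho_0)$ in the limit, plus error terms of order $\tau$ times $\int\G(\sigma_t)\,\rmd t$. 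Expanding the square,
\begin{align*}
\frac1{4\tau}\int_0^1\|\partial_t\rho_t-\tau\Delta\rho_t-\tau\div(\rho_t\nabla\Psi)\|_{-1,\rho_t}^2\,\rmd t
&= \frac1{4\tau}\int_0^1\|\partial_t\rho_t\|_{-1,\rho_t}^2\,\rmd t \\
&\quad{}- \frac12\int_0^1\big\langle\partial_t\rho_t,\Delta\rho_t+\div(\rho_t\nabla\Psi)\big\rangle_{-1,\rho_t}\,\rmd t \\
&\quad{}+ \frac\tau4\int_0^1\G(\rho_t)\,\rmd t\;,
\end{align*}
the first term is $\frac1{4\tau}W_2^2(\rho_0,\rho_1)$ up to lower order (by the Benamou--Brenier formula, since the reparametrised geodesic is nearly optimal and $P$ only decreases action by~\eqref{eq:action-bounds}), the second term telescopes to $\frac12(\F(\rho_0)-\F(\rho_1))$ by~\eqref{eq:derivative F}, and the third term is $O(\tau)$ provided $\int_0^1\G(\rho_t)\,\rmd t<\infty$ along the chosen curve.

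The main obstacle is precisely the control of $\int_0^1\G(\rho_t)\,\rmd t$ and the rigorous justification of the chain rule~\eqref{eq:derivative F}: the Fisher information is \emph{not} displacement convex (as the excerpt emphasises), so it need not be finite, let alone integrable, along the raw geodesic $(\sigma_s)$. The remedy, for which the machinery in Lemma~\ref{lem:heat-flow-bounds} has clearly been prepared, is to first regularise by running the heat-type flow $P_t$ for a small time: by~\eqref{eq:instant-reg} and~\eqref{eq:heat-flow-Fisher-bounds}, $\G(P_\eta\sigma_s)\leq e^{-2\lambda\eta}\G(\sigma_s)$ is finite for $\eta>0$, and by~\eqref{eq:action-bounds} mollification does not increase the metric speed, while~\eqref{eq:uniform-ent} guarantees $\F(P_\eta\sigma_s)\to\F(\sigma_s)$ uniformly in $s$, so the entropy endpoints are only perturbed by a controlled amount. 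Thus I would actually work with the regularised geodesic $s\mapsto P_{\eta(\tau)}\sigma_s$ for a suitably chosen $\eta(\tau)\to0$ (slowly relative to $\tau$), verify that it still joins measures converging to $\rho_0$ and $\rho_1$ in $W_2$, check the $\AC^2$ property, and then run the estimate above on it. Care is needed to choose $\eta(\tau)$ so that $\eta(\tau)\,\sup_s\G(P_{\eta(\tau)}\sigma_s)$ and the $W_2$-displacement $W_2(P_{\eta(\tau)}\sigma_i,\sigma_i)$ both vanish while the entropy error from~\eqref{eq:uniform-ent} also vanishes; a diagonal argument over these competing scales, together with the lower bound, then yields~\eqref{eq:condition a}.
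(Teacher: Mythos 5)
Your overall strategy is the same as the paper's (expand the square, use the chain rule \eqref{eq:derivative F} for the cross term, regularise the geodesic with the semigroup, invoke Lemma \ref{lem:heat-flow-bounds}), but as written there are two genuine gaps. First, admissibility: Claim \ref{claim:a} concerns \emph{fixed} $\rho_0,\rho_1\in Q$, and $I_\tau(\rho_1\mid\rho_0)$ is an infimum over $\AC^2(\rho_0,\rho_1)$ with exact boundary values. Your final competitor, the regularised geodesic $s\mapsto P_{\eta(\tau)}\sigma_s$, joins $P_\eta\rho_0$ to $P_\eta\rho_1$; a curve that merely ``joins measures converging to $\rho_0$ and $\rho_1$'' gives no bound on $I_\tau(\rho_1\mid\rho_0)$ at all (the freedom to move endpoints belongs to part (ii), via Claim \ref{claim:b} and the diagonal argument, not to Claim \ref{claim:a}). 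You must keep the concatenation you sketch at the beginning: run the Fokker--Planck flow from $\rho_0$ on $[0,\eps]$, then the reparametrised curve $P_\eps\rho_{(t-\eps)/(1-2\eps)}$, then run backwards from $P_\eps\rho_1$ to $\rho_1$. Note these end pieces are traversed at unit speed (and one of them backwards), so they do \emph{not} have zero cost; each contributes on the order of $\frac{\eps}{\tau}\,\G(\rho_i)$ after division by $4\tau$, and the middle reparametrisation costs an extra $\frac{k_\lambda\eps}{4\tau}W_2^2(\rho_0,\rho_1)$ --- this is exactly where the constraint $\eps/\tau\to0$ comes from. Also, your literal reparametrisation (``geodesic traversed on an interval of length $\sim\tau$'') would make the action term of order $W_2^2/\tau^2$; the geodesic must occupy essentially the whole time interval.

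Second, and this is the crux of the paper's proof, the choice of $\eps(\tau)$ is not pinned down by the conditions you list. After expanding, the error terms are $\frac{c_\lambda\eps}{\tau}\big(\G(\rho_0)+\G(\rho_1)\big)+\frac{k_\lambda\eps}{4\tau}W_2^2(\rho_0,\rho_1)+\frac{\tau}{4}h(\eps)$ with $h(\eps)=\int_0^1\G(P_\eps\rho_t)\dd t$, so one needs \emph{simultaneously} $\eps/\tau\to0$ and $\tau\,h(\eps)\to0$. The Fisher term is multiplied by $\tau$, not by $\eps$, so your requirement ``$\eta\sup_s\G(P_\eta\sigma_s)\to0$'' (a condition on $\eta$ alone) does not capture the coupling, and your parenthetical ``$\eta(\tau)\to0$ slowly relative to $\tau$'' points the wrong way: it destroys the $\eps/\tau$-terms when $\lambda<0$. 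The two requirements pull in opposite directions, and reconciling them is possible precisely because $\eps\,h(\eps)\to0$ as $\eps\to0$ (which follows from \eqref{eq:G-est} together with the uniform entropy convergence \eqref{eq:uniform-ent}); one then wants $\tau\approx g(\eps):=\sqrt{\eps/h(\eps)}$, so that both $\eps/\tau$ and $\tau h(\eps)$ are of order $\sqrt{\eps h(\eps)}$. Since $h$ is only known to be lower semicontinuous, defining $\eps(\tau)$ rigorously requires checking that $g$ is increasing and right-continuous and taking a generalised inverse, as the paper does. Your ``diagonal argument over these competing scales'' leaves exactly this step --- the existence of a single admissible $\eps(\tau)$ satisfying both constraints --- unaddressed, and it is the main technical content of Claim \ref{claim:a}.
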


\begin{claim}\label{claim:b}
  For every $\rho\in\calP_2(\R^d)$ there exists a sequence
  $(\rho^n)_{n} \subseteq Q$ such that $W_2^2(\rho^n,\rho)\to0$ and
  $\F(\rho^n)\to \F(\rho)$.
\end{claim}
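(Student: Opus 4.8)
The plan is to prove Claim~\ref{claim:b} by a two-stage approximation: first reduce a general $\rho \in \calP_2(\R^d)$ to a measure with bounded smooth density of compact support (or a suitable approximation thereof having finite Fisher information), and then smooth it further by the heat flow $P_t$. Concretely, I would first observe that if $\F(\rho) = +\infty$ then the statement is trivial, since one may take the constant sequence $\rho^n = \rho$ once we realize we actually want $\F(\rho^n) \to \F(\rho) = +\infty$ — but this requires producing $\rho^n \in Q$ with $\F(\rho^n) \to +\infty$ and $W_2(\rho^n,\rho)\to 0$, which still needs an argument. So the cleaner route is: handle the case $\F(\rho) < \infty$ by the construction below, and for $\F(\rho) = +\infty$ approximate $\rho$ in $W_2$ by measures $\sigma^k$ with $\F(\sigma^k) < \infty$ (e.g. normalized restrictions to large balls convolved with a mollifier) so that, by lower semicontinuity of $\F$ with respect to $W_2$, $\liminf_k \F(\sigma^k) \geq \F(\rho) = +\infty$; then apply the finite-entropy construction to each $\sigma^k$ and diagonalize.

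For the main case $\F(\rho)<\infty$, the key tool is the regularizing semigroup from Lemma~\ref{lem:heat-flow-bounds}. The idea is to set $\rho^n := P_{t_n}\rho$ for a sequence $t_n \searrow 0$. By \eqref{eq:instant-reg} we have $\G(P_{t_n}\rho) < \infty$, so $\rho^n \in Q$ for every $n$; by part~(1) of the lemma the curve $t \mapsto P_t\rho$ is continuous at $t=0$ in $W_2$, hence $W_2(\rho^n,\rho) \to 0$; and for the entropy convergence I would combine the monotonicity $\F(P_t\rho) \leq \F(\rho)$ from \eqref{eq:heat-flow-Fisher-bounds} with the lower semicontinuity of $\F$ along the $W_2$-convergent sequence $\rho^n \to \rho$, which gives $\liminf_n \F(P_{t_n}\rho) \geq \F(\rho) \geq \limsup_n \F(P_{t_n}\rho)$, so $\F(\rho^n) \to \F(\rho)$. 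This is essentially the argument already used in the proof of \eqref{eq:uniform-ent}, specialized to a single point rather than a geodesic.

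The main obstacle is the case $\F(\rho) = +\infty$: one must exhibit approximants in $Q$ whose entropy diverges, and the semigroup alone is useless there since $\F(P_t\rho) \leq \F(\rho)$ carries no information when the right side is infinite. The fix is precisely the two-stage scheme: first produce a $W_2$-approximating sequence $(\sigma^k)$ with $\F(\sigma^k) < \infty$ but $\F(\sigma^k) \to +\infty$ (truncation to balls and mollification does this, using dominated/monotone convergence for the $\int f\log f$ and $\int \Psi f$ parts together with lower semicontinuity to force divergence), then apply the finite-entropy construction to each $\sigma^k$ to get $\rho^{k,n} \in Q$ with $W_2(\rho^{k,n},\sigma^k) \to 0$ and $\F(\rho^{k,n}) \to \F(\sigma^k)$ as $n\to\infty$, and finally extract a diagonal sequence $\rho^k := \rho^{k,n(k)}$ with $n(k)$ chosen so large that $W_2(\rho^k,\sigma^k) < 1/k$ and $|\F(\rho^k) - \F(\sigma^k)| < 1/k$; then $W_2(\rho^k,\rho)\to 0$ and $\F(\rho^k)\to +\infty = \F(\rho)$. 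Verifying the required properties of the truncation–mollification step (finiteness of Fisher information of a compactly supported mollified density, and the monotone behaviour of the entropy pieces) is routine but is where the real work lies.
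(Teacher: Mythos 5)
Your finite-entropy argument is exactly the paper's proof: set $\rho^n=P_{t_n}\rho$, use \eqref{eq:instant-reg} to get $\rho^n\in Q$, Lemma~\ref{lem:heat-flow-bounds}(1) for $W_2$-convergence, and the combination of the monotonicity $\F(P_t\rho)\le\F(\rho)$ from \eqref{eq:heat-flow-Fisher-bounds} with lower semicontinuity of $\F$ for the entropy convergence. Where you diverge is the case $\F(\rho)=+\infty$, and there your extra two-stage scheme (truncation to balls, mollification, diagonalization) is correct in outline but unnecessary: the semigroup is \emph{not} useless in that case. The instantaneous regularization \eqref{eq:instant-reg} holds for every $\rho\in\calP_2(\R^d)$ with no assumption on $\F(\rho)$, so $P_t\rho\in Q$ regardless; and since $P_t\rho\to\rho$ in $W_2$, lower semicontinuity alone gives $\liminf_{t\to0}\F(P_t\rho)\ge\F(\rho)=+\infty$, i.e.\ $\F(P_t\rho)\to\F(\rho)$ without any appeal to the (indeed vacuous) upper bound. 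So the single sequence $\rho^n=P_{t_n}\rho$ settles both cases at once, which is precisely what the paper does in three lines. One further caution about your construction: you list ``finiteness of Fisher information of a compactly supported mollified density'' among the routine verifications, but this is neither routine nor needed. A density vanishing at the boundary of its support can easily have infinite relative Fisher information (already in $d=1$, linear decay at the edge makes $\int |g'|^2/g$ diverge logarithmically), so if your plan had required $\sigma^k\in Q$ directly from mollification it would be in trouble; as written it only needs $\F(\sigma^k)<\infty$, since the semigroup stage supplies the Fisher regularity, so you should drop that claim rather than try to prove it.
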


The existence of the recovery sequence then follows from a
straightforward diagonal argument, see \cite[Proposition 6.2]{Duong2013a} for
details.

\begin{proof}[Proof of Claim \ref{claim:a}:] We only need to prove the
  limsup inequality for the left-hand side of \eqref{eq:condition a},
  since the liminf inequality will be proved in
  Section~\ref{sec:lower-bound} below.  If $\rho_0 = \rho_1$ the
  claim is immediate, so we take distinct measures $\rho_0, \rho_1 \in
  Q$, and take a geodesic $(\rho_{t})_{t\in[0,1]}$ connecting
  $\rho_{0}$ and $\rho_{1}$.  We will approximate this curve by
  running the semigroup for a small time $\eps = \eps(\tau)>0$, which
  will be determined below. A careful choice of $\eps$ as a function
  of $\tau$ is crucial for our argument.  We thus consider the curve
  $(\rho^\eps_t)_{t \in [0,1]}$ defined by
\begin{equation*}
  \rho^\eps_t~=~
  \begin{cases}
    P_{t}\rho_0\;,& 0\leq t\leq \eps\;,\\
    P_{\eps}\rho_{\frac{t-\eps}{1-2\eps}}\;, & \eps\leq t\leq 1-\eps\;,\\
    P_{1-t}\rho_1\;, & 1-\eps\leq t\leq 1\;.
  \end{cases}
\end{equation*}
For the sake of brevity, we shall write $\cL \rho = \Delta\rho +
\div(\rho\nabla\Psi)$.  Using the definition of
$I_\tau(\rho_1\mid\rho_0)$ and the second identity
\eqref{eq:dissipation}, we obtain 
\begin{align*}
&  I_{\tau}(\rho_{1}\mid\rho_0)-\frac{W_{2}^{2}(\rho_{0},\rho_{1})}{4\tau} 
\\&\leq \frac{1}{4\tau}\left(\int_0^1\!\left\lVert\partial_t\rho^\eps_t-\tau
   \cL\rho^\eps_{t}
   \right\rVert^2_{-1,\rho^\eps_{t}}\dd t-W_2^2(\rho_0,\rho_1)\right) \\
&  = 
   \frac{1}{4\tau}\left(\int_0^1\!\left\lVert\partial_t\rho^\eps_t\right\rVert^2_{-1,\rho^\eps_{t}}\dd t-W_2^2(\rho_0,\rho_1)\right)
- \frac12 \int_0^1\!\langle\partial_t\rho^\eps_t,\cL\rho^\eps_{t}    \rangle_{-1,\rho^\eps_{t}}\dd t
  +\frac{\tau}{4} \int_0^1\! \G(\rho^\eps_{t}) \dd t
     \;.
\end{align*}
We shall estimate these three terms separately. Let
$c_\lambda,k_\lambda > 0$ be sufficiently large so that
$\frac{e^{-2\lambda\eps}}{1-2\eps} \leq 1 + k_\lambda \eps$ and
$\int_0^\eps e^{-2\lambda t}\dd t\leq c_\lambda\eps$ for all $\eps \in
(0,\frac14)$. Using the semigroup estimates \eqref{eq:heat-flow-Fisher-bounds}
and \eqref{eq:action-bounds} and the Benamou--Brenier formula
\eqref{eq:BB}, the first term can be bounded by
\begin{align*}
    \int_0^1\!\left\lVert\partial_t\rho^\eps_t\right\rVert^2_{-1,\rho^\eps_{t}}\dd t
    &=\int_0^\eps\!\lVert\cL \rho^\eps_t\rVert^2_{-1,\rho^\eps_{t}} \dd t + \frac{1}{1-2\eps} \int_0^1\!\lVert \partial_t(P_\eps \rho_t)\rVert^2_{-1,P_\eps\rho_{t}}\dd t + \int_{1-\eps}^1
     \!\lVert\cL \rho^\eps_t\rVert^2_{-1,\rho^\eps_{t}} \dd t
    \\&\leq \int_0^\eps\! \G(P_t \rho_0) \dd t + \frac{e^{-2\lambda\eps}}{1-2\eps}\int_0^1\!\lVert \partial_t\rho_t\rVert^2_{-1,\rho_{t}}\dd t  + \int_{1-\eps}^1
     \! \G(P_{1-t}\rho_1) \dd t
  \\ &\leq c_\lambda\eps \G(\rho_0) + (1+k_\lambda \eps)W_2^2(\rho_0,\rho_1) +  c_\lambda\eps \G(\rho_1)\;.
\end{align*}
For the third term we use \eqref{eq:heat-flow-Fisher-bounds} to obtain
\begin{align*}
\int_0^1\! \G(\rho^\eps_{t}) \dd t \leq c_\lambda\eps (\G(\rho_0) + \G(\rho_1))
    +h(\eps)\;, \quad \text{where} \quad h(\eps) =  \int_0^1\!\G(P_\eps \rho_t) \dd t\;.
\end{align*}
We claim that $h(\eps)$ is finite for each $\eps > 0$. Indeed, using
\eqref{eq:heat-flow-Fisher-bounds} and \eqref{eq:dissipation} we
obtain
\begin{align}\label{eq:G-est}
  \G(P_\eps \rho_t) \int_{0}^\eps e^{2\lambda(\eps - s)} \dd s \leq
  \int_{0}^\eps \G(P_s \rho_t) \dd s = \F(\rho_t)-\F(P_\eps \rho_t)\;.
\end{align}
The right-hand side is uniformly bounded in $t$ thanks to the
$\lambda$-convexity of $\F$ and the uniform convergence
\eqref{eq:uniform-ent}. Consequently, $h(\eps) < \infty$.

To treat the second term, we can thus use \eqref{eq:derivative F} to obtain
\begin{align*}
\int_0^1\!\langle\partial_t\rho^\eps_t,\cL\rho^\eps_{t}
\rangle_{-1,\rho^\eps_{t}}\dd t = \F(\rho_0) - \F(\rho_1)\;.
\end{align*}
Combining these three bounds, we infer
that
\begin{align*}
  I_{\tau}(\rho_{1}\mid\rho_0)-\frac{W_{2}^{2}(\rho_{0},\rho_{1})}{4\tau}
   &  \leq
   \frac{1}{2}\F(\rho_{1})-\frac{1}{2}\mathcal{F}(\rho_{0}) 
   + \eps\frac{c_\lambda}{4} \Big(\tau+ \frac1\tau\Big) \big (\G(\rho_0) + \G(\rho_1) \big)
   \\& \qquad
      + \frac{k_\lambda \eps}{4\tau}    W_2^2(\rho_0,\rho_1) + \frac{\tau }4h(\eps) \;.
\end{align*}
We claim that $\eps=\eps(\tau)$ can be chosen as a function of $\tau$
such that
\begin{align}
  \frac{\eps(\tau)}{\tau}\to0 \quad\text{and} \quad \tau
  h\big(\eps(\tau)\big)\to 0 \quad \text {as } \tau \to 0\;.
  \label{eq:tau-eps}
\end{align}
This yields the limsup inequality in \eqref{eq:condition a}. The
corresponding liminf inequality will follow from
\eqref{eq:lower bound}.
   
It thus remains to prove the claim~\eqref{eq:tau-eps}. For
$\eps> 0$ we set
\begin{align*}
g(\eps):=\sqrt{\eps/h(\eps)}\;.
\end{align*}
Writing 
$g(\eps)=\sqrt{ {\eps e^{2\lambda\eps}}/{e^{2\lambda\eps}h(\eps)}}$,
it follows from \eqref{eq:heat-flow-Fisher-bounds} that $g$ is strictly
increasing on $(0,\eps_0)$ for $\eps_0$ sufficiently small. Taking into
account that $h(0) > 0$ since $\rho_0 \neq \rho_1$, we note that
$\lim_{\eps \to 0} g(\eps) = 0$. 
 To show that $g$ is right-continuous,
note that for each $t \in [0,1]$, the function $G_t : \eps \mapsto
\G(P_\eps \rho_t)$ is lower semicontinuous and non-negative, see
e.g. \cite[Proposition 10.4.14]{Ambrosio2008}. Fatou's lemma implies that $h :=
\int_0^1\!G_t \dd t$ is lower semicontinuous as well. Hence $g$ is upper
semicontinuous and thus right-continuous, since it is also
increasing. It follows from these properties that we can define
\begin{align*}
  \eps(\tau) := g^{-1}(\tau) := \inf \big\{\eps : g(\eps)>\tau\big\}
\end{align*}
as the generalised inverse of $g$. We shall show that this function
has the desired properties. 

Since $g$ is right-continuous, we note that $g(\eps(\tau)/2)
\leq \tau \leq g(\eps(\tau))$, which implies that the expressions in
\eqref{eq:tau-eps} can be estimated from above by
\begin{align*}
 \frac{\eps(\tau)}{\tau} \leq 2\sqrt{\frac{\eps(\tau)}{2} h\left(\frac{\eps(\tau)}{2}\right)}\;,\qquad 
 \tau h\big(\eps(\tau)\big)\leq \sqrt{\eps(\tau) h(\eps(\tau))}\;.
\end{align*}
It thus suffices to show that $\eps h(\eps) \to 0$ as $\eps \to 0$.
To show this, note that $\eps^{-1}\int_0^\eps e^{2\lambda s} \dd s
\geq \min \{1, e^{\lambda/2} \} =: \tilde k_\lambda$ for all $\eps \in
(0,\frac14)$.
Therefore, \eqref{eq:G-est} yields
\begin{align*}
  \tilde k_\lambda \eps \G(P_\eps \rho_t)
 \leq \F(\rho_t)-\F(P_\eps \rho_t)\;.
\end{align*}
By \eqref{eq:uniform-ent} the right-hand side converges to $0$ as
$\eps\to0$, uniformly for $t \in [0,1]$. It follows that
\begin{align*}
  \eps h(\eps) = \eps\int_0^1 \G(P_\eps \rho_t)\dd t~\to~ 0\
\end{align*}
as $\eps\to0$, which completes the proof.
\end{proof}

\begin{proof}[Proof of Claim \ref{claim:b}:]
  We approximate $\rho\in\calP_2(\R^d)$ by applying the
  semigroup. The first inequality in \eqref{eq:instant-reg} yield that $P_\eps\rho\in Q$ for
  any $\eps>0$, and Lemma \ref{lem:heat-flow-bounds}(1) implies that $P_\eps\rho$
  approximates $\rho$ in $W_2$-distance. Finally, since $\F$ is lower semicontinuous 
  with respect to $W_2$, the convergence $\F(P_\eps\rho)\to\F(\rho)$ as $\eps\to0$ follows from \eqref{eq:heat-flow-Fisher-bounds}.
\end{proof}

\subsection{Lower bound}
\label{sec:lower-bound}

For completeness, we reproduce here the short proof of statement (i) in Theorem 2.2, the lower bound, as given in [DLR13, Theorem 5.1, see erratum].

\begin{proof}
By definition of the infimum in \eqref{eq:rate-functional}, there
exists a sequence of absolutely continuous curves
$(\rho_{t}^\tau)_{t\in[0,1]}$ 
such that
\begin{align*}
  I_\tau(\rho_1^\tau\mid\rho_0)+\tau &\geq \frac{1}{4\tau}\int_{0}^{1}\!\big\lVert \partial_t \rho_t^\tau  -\tau(\Delta\rho_{t}^\tau+\div(\rho_{t}^\tau\,\nabla\Psi))\big\rVert^{2}_{-1,\rho_{t}^\tau}\dd t\;.
\end{align*}
In particular, the right-hand side is finite for all $\tau > 0$. Since $(\rho_t^\tau)_t$ is assumed to be $2$-absolutely continuous, we infer that  $\int_0^1\lVert\partial_t
\rho_t^\tau\rVert^{2}_{-1,\rho_{t}^\tau}\dd t$ is finite as well, and therefore
\begin{align*}
  \int_0^1 \G(\rho^\tau_t)\dd t = \int_0^1\lVert \Delta\rho_{t}^\tau+\div(\rho_{t}^\tau\,\nabla\Psi)\rVert^{2}_{-1,\rho_{t}^\tau}\dd t<\infty\;.
\end{align*}
It follows that that $t\mapsto \F(\rho^\tau_t)$
is absolutely continuous and the identity \eqref{eq:derivative F} holds.
Thus we can estimate
\begin{align*}
  I_\tau(\rho_1^\tau\mid\rho_0)+\tau &\geq \frac{1}{4\tau}\int_{0}^{1}\!\big\lVert \partial_t \rho_t^\tau  -\tau(\Delta\rho_{t}^\tau+\div(\rho_{t}^\tau\,\nabla\Psi))\big\rVert^{2}_{-1,\rho_{t}^\tau}\dd t\\
  &=\frac{1}{4\tau}\int_{0}^{1}\!\lVert \partial_t \rho_t^\tau\rVert^2_{-1,\rho_t^\tau} \dd t - \frac12\int_0^1\!\langle\partial\rho_t^\tau,\Delta\rho_t^\tau+\div(\rho_t^\tau\nabla\Psi)\rangle_{-1,\rho_t^\tau} \dd t\\
  &\hspace{5cm}+\frac\tau4\int_0^1\!\big\lVert \Delta\rho_t^\tau+\div(\rho_t^\tau\nabla\Psi)\big\rVert_{-1,\rho_t^\tau}^2 \dd t\\
  &\geq \frac{1}{4\tau}W_2^2(\rho_0,\rho_1^\tau) + \frac12\F(\rho_1^\tau) -
  \frac12\F(\rho_0)\;,
\end{align*}
where the last line follows from the Benamou--Brenier formula
\eqref{eq:BB}. The
claim~\eqref{eq:lower bound} then follows from the lower
semicontinuity of $\mathcal{F}$ with respect to $W_2$. 

If $\int_{\R^d}e^{-\Psi(x)\dd x}<\infty$, the result follows by applying the lower semicontinuity of $\F$ with respect to weak convergence in the final step.
\end{proof}

We finish by remarking that in the statement of Theorem \ref{th:main result}(i), the assumption of Wasserstein convergence cannot be weakened to weak convergence if the equilibrium measure $\nu$ does not have finite mass. A counterexample can be found in the erratum to \cite{Duong2013a}.

\appendix

\section{Equivalent formulations of the Benamou--Brenier formula}
\label{sec:BB}

The Benamou--Brenier formula in optimal transport asserts that for
$\rho_0,\rho_1\in\calP_2(\R^d)$,
\begin{align}\label{eq:BB-again}
  W^2_2(\rho_0,\rho_1)~=~ \inf\limits_{(\rho_t)_t\in\AC(\rho_0,\rho_1)}\left\{\int_0^1 \trinorm{ \partial_t\rho_t}^2_{-1,\rho_t}\dd t\right\}\;.
\end{align}
In this formula, the norm $\trinorm{\cdot}_{{-1,\rho}}$ is defined by
\begin{align}\label{eq:tri-norm}
 \trinorm{s}_{-1,\rho}^{2} := \inf_{v \in L^{2}(\rho;\R^{d})} 
    \bigg\{ \int_{\R^{d}} |v(x)|^{2} \dd \rho(x)  \ : \ s + \div (\rho v) = 0 \bigg\}\;.
\end{align}
for $\rho \in \calP(\R^{d})$ and $s \in \D'$.  It can be shown that
the infimum in this definition is uniquely attained, and its minimiser
can be characterised as follows: a solution $v \in L^2(\rho; \R^d)$ to
the ``continuity equation'' $s + \div (\rho v) = 0$ is optimal in
\eqref{eq:tri-norm} if and only if it belongs to the space of
generalised gradient vector fields defined by
\begin{align*}
 H_{\rho} := \overline{ \{ \nabla \psi : \R^{d} \to \R^{d}  \ |  \ \psi \in \D \}}^{L^{2}(\rho; \R^{d})}\;.
\end{align*}
We refer to \cite[Section 8.4]{Ambrosio2008} for the proof of these
facts. Note in particular that
\begin{align}\label{eq:norm-explicit}
  \trinorm{\div (\rho \nabla \psi)}_{-1,\rho}^2 = \int_{\R^d} |\nabla
  \psi(x)|^2 \dd \rho(x)
\end{align}
whenever $\nabla \psi \in L^2(\rho; \R^d)$.

The following lemma relates the norm $\trinorm{\cdot}_{{-1,\rho}}$ to
the norm $\lVert\cdot\rVert_{{-1,\rho}}$ defined in Section \ref{sec:main-results}.

\begin{lemma}\label{lem:compare-norms}
Let $\rho \in \calP(\R^{d})$ and $s \in \D'$. Then $\|s\|_{{-1,\rho}} =  \trinorm{s}_{-1,\rho}$.
\end{lemma}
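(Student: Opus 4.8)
The plan is to prove the two inequalities $\|s\|_{-1,\rho} \le \trinorm{s}_{-1,\rho}$ and $\trinorm{s}_{-1,\rho} \le \|s\|_{-1,\rho}$ separately, using the duality formula $\|s\|_{-1,\rho}^2 = \sup_{f \in \D} \langle s,f\rangle^2 / \int |\nabla f|^2 \dd\rho$ on one side and the infimum formula \eqref{eq:tri-norm} on the other. Both bounds should reduce to elementary applications of Cauchy--Schwarz together with the integration-by-parts identity $\langle \div(\rho v), f\rangle = -\int \nabla f \cdot v \dd\rho$ for $f \in \D$.

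For the inequality $\|s\|_{-1,\rho} \le \trinorm{s}_{-1,\rho}$, I would first dispose of the trivial case $\trinorm{s}_{-1,\rho} = +\infty$. Otherwise, fix any $v \in L^2(\rho;\R^d)$ with $s + \div(\rho v) = 0$; then for every test function $f \in \D$ with $\int |\nabla f|^2 \dd\rho \ne 0$ we compute $\langle s, f\rangle = -\langle \div(\rho v), f\rangle = \int \nabla f \cdot v \dd\rho$, and Cauchy--Schwarz gives $\langle s,f\rangle^2 \le \big(\int |\nabla f|^2 \dd\rho\big)\big(\int |v|^2 \dd\rho\big)$. Dividing and taking the supremum over $f$ yields $\|s\|_{-1,\rho}^2 \le \int |v|^2 \dd\rho$; taking the infimum over admissible $v$ gives the claim.

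For the reverse inequality $\trinorm{s}_{-1,\rho} \le \|s\|_{-1,\rho}$, I would assume $\|s\|_{-1,\rho} < \infty$ (otherwise there is nothing to prove). The equivalent formula $\|s\|_{-1,\rho}^2 = \sup_{f\in\D} \{2\langle s,f\rangle - \int |\nabla f|^2 \dd\rho\}$ shows that the linear functional $\nabla f \mapsto \langle s,f\rangle$ satisfies $\langle s,f\rangle \le \frac12\|s\|_{-1,\rho}^2 + \frac12\int |\nabla f|^2 \dd\rho$; scaling $f \mapsto cf$ and optimising over $c > 0$ recovers $\langle s,f\rangle \le \|s\|_{-1,\rho}\,\big(\int |\nabla f|^2 \dd\rho\big)^{1/2}$, i.e. the map $\nabla f \mapsto \langle s,f\rangle$ is a bounded linear functional on the subspace $\{\nabla f : f \in \D\} \subseteq L^2(\rho;\R^d)$ with norm at most $\|s\|_{-1,\rho}$ (it is well defined there: if $\nabla f = 0$ in $L^2(\rho)$ then $\langle s,f\rangle = 0$ by the bound). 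By the Hahn--Banach theorem extend it to a bounded functional on all of $L^2(\rho;\R^d)$ with the same norm, and by the Riesz representation theorem find $w \in L^2(\rho;\R^d)$ with $\int |w|^2 \dd\rho \le \|s\|_{-1,\rho}^2$ and $\langle s,f\rangle = \int \nabla f \cdot w \dd\rho$ for all $f \in \D$. The latter says precisely that $s + \div(\rho w) = 0$ in $\D'$ (testing against $f$ and integrating by parts), so $w$ is admissible in \eqref{eq:tri-norm} and hence $\trinorm{s}_{-1,\rho}^2 \le \int |w|^2 \dd\rho \le \|s\|_{-1,\rho}^2$.

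The only mildly delicate point — and the place I would be most careful — is the well-definedness of the functional $\nabla f \mapsto \langle s,f\rangle$ on the image subspace in $L^2(\rho;\R^d)$, which is exactly what the $H^{-1}$-boundedness of $s$ buys us; everything else is a routine Hahn--Banach/Riesz argument together with integration by parts. One could alternatively avoid Hahn--Banach by projecting onto $H_\rho$ and using the already-cited characterisation of optimality in \eqref{eq:tri-norm} from \cite[Section 8.4]{Ambrosio2008}, but the self-contained version above seems cleaner.
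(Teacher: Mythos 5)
Your proposal is correct, and it takes a mildly but genuinely different route from the paper. The paper does not split the statement into your two inequalities: it first assumes $\trinorm{s}_{-1,\rho}<\infty$, takes the \emph{optimal} $v$ in \eqref{eq:tri-norm}, and invokes the nontrivial facts (cited from \cite[Section 8.4]{Ambrosio2008}) that this minimiser exists and lies in the closure $H_\rho$ of gradient fields; writing $\ip{s,f}=\int v\cdot\nabla f\dd\rho$ and completing the square, the supremum defining $\|s\|_{-1,\rho}$ is then computed to equal $\|v\|_{L^2(\rho;\R^d)}$ exactly, giving equality outright in that case. Your first inequality replaces this by a plain Cauchy--Schwarz estimate valid for \emph{any} admissible $v$, which avoids both the attainment of the infimum and the $H_\rho$-characterisation of the minimiser, so your argument is more self-contained: it uses only the two dual formulas for $\|\cdot\|_{-1,\rho}$ together with Hahn--Banach and Riesz. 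The second halves are essentially the same step in two guises: the paper extends $T:\nabla f\mapsto\ip{s,f}$ by density to the Hilbert space $H_\rho$ and applies Riesz there, whereas you extend by Hahn--Banach to all of $L^2(\rho;\R^d)$ and then apply Riesz; both produce an admissible field of norm at most $\|s\|_{-1,\rho}$, and you correctly flag and handle the one delicate point, namely well-definedness of $T$ when $\nabla f=0$ $\rho$-a.e., via the scaling argument based on the second formula for $\|s\|_{-1,\rho}$. What the paper's route buys in exchange for quoting \cite{Ambrosio2008} is the additional information that the norm is realised by the optimal field in $H_\rho$, consistent with the optimality characterisation recorded in the appendix; for the bare equality of the two norms your version is arguably cleaner.
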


\begin{proof}
  Suppose first that $\trinorm{s}_{-1, \rho} < \infty$, and let $v \in
  L^{2}(\rho;\R^{d})$ be the unique minimiser in the definition of
  $\trinorm{s}_{{-1,\rho}}$.  If $\trinorm{s}_{{-1,\rho}} = 0$, it
  follows that $v$ vanishes $\rho$-a.e., hence $\ip{s,f} = 0$ for all
  $f \in \D$, which implies that $\| s \|_{-1,\rho} = 0$.  Assume now,
  without loss of generality, that $\trinorm{s}_{{-1,\rho}}^{2} = \int
  |v|^{2} \dd \rho =1$. Then,
\begin{align*}
 \| s \|_{-1,\rho}
&    = \sup_{f \in \D} \bigg\{  \ip{-\div (\rho v), f}  \ \bigg| \ \int_{\R^{d}} |\nabla f|^{2} \dd \rho = 1 \bigg\}
\\&    = \sup_{f \in \D} \bigg\{  \int_{{\R^{d}}}  v\cdot\nabla f \dd \rho  \ \bigg| \ \int_{\R^{d}} |\nabla f|^{2} \dd \rho = 1 \bigg\}
\\&    = \sup_{f \in \D} \bigg\{  \frac12 \int_{{\R^{d}}}  | v |^{2} + | \nabla f |^{2} - | v - \nabla f|^{2}  \dd \rho  \ \bigg| \ \int_{\R^{d}} |\nabla f|^{2} \dd \rho = 1 \bigg\}
\\&       = \sup_{f \in \D} \bigg\{ 1 - \frac12 \int_{{\R^{d}}} | v - \nabla f|^{2}  \dd \rho  \ \bigg| \ \int_{\R^{d}} |\nabla  f|^{2} \dd \rho = 1 \bigg\}\;.
\end{align*}
Since $v\in H_{\rho}$, it follows from this computation that
$\|s\|_{-1,\rho} = 1 = \trinorm{s}_{-1,\rho}$.

On the other hand, if $\| s \|_{-1,\rho} < \infty$, it follows from
$\langle s,f\rangle\leq\lVert s\rVert_{-1,\rho}\cdot\lVert\nabla
f\rVert_{L^2(\rho;\R^d)}$ that the mapping
\begin{align*}
  T : \{ \nabla f : f \in \D\} \to \R, \qquad \nabla f \mapsto
  \ip{s,f}
\end{align*}
extends to a bounded linear functional
$T:(H_\rho,\lVert\cdot\rVert_{L^2(\rho;\R^d)})\to\R$ of norm $\| s
\|_{-1,\rho}$. Hence, the Riesz representation theorem implies that
$\ip{s,f} = \int_{\R^d}\!v\cdot\nabla f\dd \rho$ for some $v \in
H_\rho$ with $\|v\|_{L^2(\rho;\R^d)} = \| s \|_{-1,\rho}$. It follows
that $\trinorm{s}_{-1,\rho} \leq \|v\|_{L^2(\rho;\R^d)}$. In view of
the first part of the proof, the latter inequality is in fact an
equality.
\end{proof}

As a consequence of this lemma we infer that the Benamou--Brenier
formulas in \eqref{eq:BB} and \eqref{eq:BB-again} are equivalent.

\section*{Acknowledgement}
This work has been initiated when MR visited ME and JM at the
University of Bonn. The authors gratefully acknowledge support by the
German Research Foundation through the Hausdorff Center for
Mathematics at the University of Bonn, and the Collaborative Research
Centers 1060 ``The Mathematics of Emergent Effects'' and 1114
``Scaling Cascades in Complex Systems''. 

The authors thank the referees for their useful remarks and Kaveh Bashiri for pointing out a mistake in a preliminary version of this paper.

\bibliographystyle{alpha}
\bibliography{EMR-library}

\begin{thebibliography}{ADPZ11}

\bibitem[ADPZ11]{Adams2011}
S.~Adams, N.~Dirr, M.~A. Peletier, and J.~Zimmer.
\newblock From a large-deviations principle to the {W}asserstein gradient flow:
  a new micro-macro passage.
\newblock {\em Communications in Mathematical Physics}, 307(3):791--815, 2011.

\bibitem[AGS08]{Ambrosio2008}
L.~Ambrosio, N.~Gigli, and G.~Savar{\'e}.
\newblock {\em Gradient flows in metric spaces and in the space of probability
  measures}.
\newblock Lectures in Mathematics. ETH Z{\"u}rich. Birkhauser, Basel,
  Switzerland, 2nd edition, 2008.

\bibitem[BB00]{Benamou2000}
J.D. Benamou and Y.~Brenier.
\newblock A computational fluid mechanics solution to the {M}onge-{K}antorovich
  mass transfer problem.
\newblock {\em Numerische Mathematik}, 84(3):375--393, 2000.

\bibitem[DG87]{Dawson1987}
D.A. Dawson and J.~G{\"a}rtner.
\newblock Large deviations from the {M}c{K}ean-{V}lasov limit for weakly
  interacting diffusions.
\newblock {\em Stochastics}, 20(4):247--308, 1987.

\bibitem[DLR13]{Duong2013a}
M.~H. Duong, V.~Laschos, and M.~Renger.
\newblock Wasserstein gradient flows from large deviations of many-particle
  limits.
\newblock {\em ESAIM: Control, Optimisation and Calculus of Variations},
  19(4):1166--1188, 2013.
\newblock Erratum accepted for publication,
  \url{https://www.wias-berlin.de/people/renger/Erratum/DLR2015ErratumFinal.pdf}.

\bibitem[DZ98]{Dembo1998}
A.~Dembo and O.~Zeitouni.
\newblock {\em Large deviations techniques and applications}, volume~38 of {\em
  Applications of Mathematics (New York)}.
\newblock Springer-Verlag, New York, second edition, 1998.

\bibitem[FK06]{Feng2006}
J.~Feng and T.G. Kurtz.
\newblock {\em Large deviations for stochastic processes}, volume 131 of {\em
  Mathematical surveys and monographs}.
\newblock American Mathematical Society, Providence, RI, USA, 2006.

\bibitem[FN12]{Feng2012}
J.~Feng and T.~Nguyen.
\newblock {H}amilton--{J}acobi equations in space of measures associated with a
  system of conservation laws.
\newblock {\em Journal de Math{\'e}matiques Pures et Appliqu{\'e}es}, 97(4):318
  -- 390, 2012.

\bibitem[JKO98]{Jordan1998}
R.~Jordan, D.~Kinderlehrer, and F.~Otto.
\newblock The variational formulation of the {F}okker-{P}lanck equation.
\newblock {\em SIAM Journal on Mathematical Analysis}, 29(1):1--17, 1998.

\bibitem[L{\'e}o07]{Leonard2007}
C.~L{\'e}onard.
\newblock A large deviation approach to optimal transport.
\newblock arXiv:0710.1461, 2007.

\bibitem[McC97]{McCann1997}
R.~J. McCann.
\newblock A convexity principle for interacting gases.
\newblock {\em Adv. Math.}, 128(1):153--179, 1997.

\bibitem[OV00]{OV00}
F.~Otto and C.~Villani.
\newblock Generalization of an inequality by {T}alagrand and links with the
  logarithmic {S}obolev inequality.
\newblock {\em J. Funct. Anal.}, 173(2):361--400, 2000.

\bibitem[PRV13]{Peletier2013}
M.A. Peletier, D.R.M. Renger, and M.~Veneroni.
\newblock Variational formulation of the {F}okker-{P}lanck equation with decay:
  a particle approach.
\newblock {\em Communications in Contemporary Mathematics}, 15(5):1350017,
  2013.

\bibitem[Vil03]{Villani2003}
C.~Villani.
\newblock {\em Topics in optimal transportation}, volume~58 of {\em Graduate
  Studies in Mathematics}.
\newblock American Mathematical Society, Providence, RI, USA, 2003.

\bibitem[Vil09]{Vil09}
C.~Villani.
\newblock {\em Optimal transport, Old and new}, volume 338 of {\em Grundlehren
  der Mathematischen Wissenschaften}.
\newblock Springer-Verlag, Berlin, 2009.

\end{thebibliography}

\end{document}